\theoremstyle{plain}
\newtheorem{theorem}{Theorem}[section]
\newtheorem{lemma}[theorem]{Lemma}
\theoremstyle{definition}
\newtheorem*{remark}{Remark}
\theoremstyle{definition}
\pgfplotsset{compat=1.18}
\renewcommand{\Re}{\operatorname{Re}}
\newcommand{\logb}[1]{\log{\left(#1\right)}}
\begin{document}

\title{On the error in the prime number theorem in short intervals}

\author[E.~S.~Lee]{Ethan~Simpson~Lee}
\address{University of the West of England, School of Computing and Creative Technologies, Coldharbour Lane, Bristol, BS16 1QY} 
\email{ethan.lee@uwe.ac.uk}
\urladdr{\url{https://sites.google.com/view/ethansleemath/home}}

\maketitle

\begin{abstract}
Assuming the Riemann Hypothesis, we derive explicit bounds for the error terms in short interval analogues of the prime number theorem using a new smoothing argument. Our results improve upon earlier bounds in both constant terms and applicable ranges. In addition, we apply our bounds to establish sharper conditional bounds for a broad class of weighted sums over primes in a short interval.
\end{abstract}

\section{Introduction}

Throughout this paper, $(x,x+h]$ such that $h = o(x)$ as $x\to\infty$ is called a short interval and $p$ denotes a prime number. In addition, let
\begin{equation*}
    \pi(x) = \sum_{p\leq x} 1 , \quad
    \theta(x) = \sum_{p\leq x} \log{p} , \quad\text{and}\quad
    \psi(x) = \sum_{n\leq x} \Lambda(n) ,
\end{equation*}
where $\Lambda$ is the von Mangoldt function. The asymptotic results $\pi(x) \sim x/\log{x}$, $\theta(x) \sim x$, and $\psi(x) \sim x$ are collectively called the prime number theorem. Each statement in the prime number theorem is equivalent and the error in $\psi(x) \sim x$ is directly linked to the distribution of zeros of the Riemann zeta-function. 

Between 1962 and 1976, Rosser and Schoenfeld wrote a series of foundational papers \cite{Rosser,RosserSchoenfeld,Schoenfeld}, which provide explicit descriptions of the error in each statement in the prime number theorem. Their results continue to be extensively applied in the literature, and refinements to their results are an active area of research. For examples, see \cite{Broadbent, PlattTrudgian, BPT_mean_square}. By extension, we also expect explicit versions of short interval analogues of the prime number theorem to be widely applicable. In particular, such results allow us to effectively study the local distribution of primes. 

The primary objective of this paper is to establish new explicit descriptions for the error in short interval analogues of each statement in the prime number theorem. Our main results are presented in Theorem \ref{thm:main_pnt} and Theorem \ref{thm:prime_sum_bounder}. The Riemann Hypothesis (RH) is assumed throughout, to obtain the strongest outcomes. This is a standard assumption in the literature, and there is a lot of evidence to support it (see \cite{PlattTrudgianRH}).



For any $h = o(x)$ which is ``large enough'', short interval analogues of the prime number theorem have the form
\begin{equation}\label{eqn:PNT_in_SI_forms}
    \pi(x+h) - \pi(x) \sim \frac{h}{\log{x}} , \quad
    \theta(x+h) - \theta(x) \sim h, \quad\text{and}\quad 
    \psi(x+h) - \psi(x) \sim h .
\end{equation}
The asymptotic results in \eqref{eqn:PNT_in_SI_forms} are closely related, so progress toward one result translates into progress on all three results in \eqref{eqn:PNT_in_SI_forms}. To clarify what we mean by ``large enough'', we recall results from Heath-Brown \cite{HeathBrown}, Maier \cite{Maier}, and Selberg \cite{Selberg} on the asymptotic involving $\pi(x)$ in \eqref{eqn:PNT_in_SI_forms}. Heath-Brown proved this result (unconditionally) when $h = x^{\varepsilon}$ and $\varepsilon > 7/12$, Maier proved that this result is false if $h = (\log{x})^\lambda$ for any $\lambda > 1$, and Selberg proved this result assuming the RH is true and $h/\sqrt{x}\log{x} \to\infty$ as $x\to\infty$. In light of these results, we observe that $h = x^{\varepsilon}$ such that $\varepsilon > 1/2$ is ``large enough''.

Before we introduce our first result, recall that if $x\geq 73.2$ and the RH is true, then a straightforward bound for the approximation involving $\psi(x)$ in \eqref{eqn:PNT_in_SI_forms} is 
\begin{align*}
    \left| \psi(x+h) - \psi(x) - h \right|
    &\leq \frac{\sqrt{x} (\log{x})^2}{8\pi} \left( 1 + \frac{\sqrt{x + h}}{\sqrt{x}} \left(\frac{\logb{x+h}}{\log{x}}\right)^2\right) . 
\end{align*}
This follows from a famous result from Schoenfeld \cite[Thm.~10]{Schoenfeld} on the error in $\psi(x) \sim x$. This explicit bound is one of the best we can prove today for $h$ closer to $x$, although sharper bounds can be proved for smaller choices of $h$. In particular, we prove Theorem \ref{thm:main_pnt}, which presents a sharper bound when $\sqrt{x}\log{x} \leq h \leq x^{\alpha}$ for any 
\begin{equation*}
    \alpha \leq 
    \begin{cases}
        0.64951\dots &\text{on $x \geq e^{20}$,} \\
        0.71705\dots &\text{on $x \geq e^{40}$.}
    \end{cases}
\end{equation*}

\begin{theorem}\label{thm:main_pnt}
Suppose that the RH is true, $x \geq e^{20}$, and
\begin{equation}\label{eqn:L_def}
    L 
    = \frac{2}{\pi + \sqrt{\pi^2 + 4\pi}} + \frac{1}{\pi} \left( 2 + \logb{\frac{\pi ( \pi + \sqrt{\pi^2 + 4\pi} + 2 )}{4}}\right) = 1.54262\dots .
\end{equation}
If $\sqrt{x}\log{x} \leq h \leq x^{3/4}$, then 
\begin{equation}\label{eqn:SI_result_psi_explicit}
    \left| \psi(x+h) - \psi(x) - h \right| < \left(\frac{1}{\pi} \logb{\frac{h}{\sqrt{x}\log{x}}} + L + \frac{1.022}{\log{x}}\right) \sqrt{x}\log{x} 
\end{equation}
and 
\begin{equation}\label{eqn:SI_result_theta_explicit}
    \left| \theta(x+h) - \theta(x) - h \right| < \left(\frac{1}{\pi} \logb{\frac{h}{\sqrt{x}\log{x}}} + L + \frac{1.046}{\log{x}}\right) \sqrt{x}\log{x} .
\end{equation}
\end{theorem}

Prior to this paper, Cully-Hugill and Dudek proved a related result to \eqref{eqn:SI_result_psi_explicit} only in \cite[Thm.~1]{CullyHugillDudek}. In particular, if $\sqrt{x}\log{x} \leq h \leq x^{3/4}$, $\log{x} \geq 40$, and the RH is true, then they proved\footnote{There was a typo in \cite{CullyHugillDudek} that affected some of their constants; Cully-Hugill has corrected the typo and updated their result in her PhD thesis \cite{CullyHugillThesis}. The bound \eqref{thm:CH_D} is the result that is stated in her thesis.}
\begin{equation}\label{thm:CH_D}
    |\psi(x+h) - \psi(x)-h|< \left(\frac{1}{\pi} \logb{\frac{h}{\sqrt{x}\log x}} + 2.167\right)\sqrt{x}\log{x} .
\end{equation} 
Our result \eqref{eqn:SI_result_psi_explicit} holds for a broader range of $x$ and offers constant improvement as $x\to\infty$. To exemplify this, if $\log{x}\geq 40$, then Theorem \ref{thm:main_pnt} demonstrates that their constant $2.167$ can be refined to $1.56817\ldots$. Our main innovation, which all of our improvements can be attributed to, is the implementation of a new, different smoothing method. 

Next, suppose that $f$ is any decreasing, non-negative, continuous function. Applying the explicit bounds in \eqref{eqn:SI_result_theta_explicit}, we prove the following general-purpose result, which provides sharper conditional approximations for any sum of $f(p)$ over primes $p$ in a short interval. These bounds have significant potential for utility in a range of intermediate calculations arising in analytic number theory.

\begin{theorem}\label{thm:prime_sum_bounder}
Suppose the RH is true and $\sqrt{x}\log{x} \leq h \leq x^{3/4}$. If $\log{x}\geq 20$, then
\begin{align*}
    \Bigg| \sum_{x < p\leq x+h} f(p) - \int_{x}^{x+h} \frac{f(t)}{\log{t}}\,dt \Bigg| 
    &\leq \left(\frac{1}{\pi} \logb{\frac{h}{\sqrt{x}\log{x}}} + L + \frac{1.046}{\log{x}}\right) f(x) \sqrt{x} \\
    &\qquad\qquad + \left(1 + \frac{h}{x}\right)^{\frac{5}{2}} \frac{f(x) \sqrt{x}\log{x}}{8\pi} \\
    &\qquad\qquad - \frac{1}{8\pi} \int_{x}^{x+h} \sqrt{t}\log{t} \left(f'(t) - \frac{f(t)}{t\log{t}}\right)\,dt .
\end{align*}
\end{theorem}

We demonstrate the extra precision one can access using Theorem \ref{thm:prime_sum_bounder} by means of an example. That is, assuming the RH is true, $f(p) = 1/p$, $\log{x} \geq 20$, and $\sqrt{x}\log{x} \leq h \leq x^{3/4}$, we bound the objective sum two ways and compare outcomes. First, two applications of an explicit version of Mertens' second theorem (e.g., see \cite[(6.21)]{Schoenfeld} or \cite[Cor.~1.3]{LeeNosal}) imply
\begin{equation}\label{eqn:MT2_SI_explicit_Schoenfeld}
    \sum_{x < p\leq x+h} \frac{1}{p} = \int_{x}^{x+h} \frac{dt}{t\log{t}} 
    + O\Bigg( \frac{6\log{x}}{8\pi\sqrt{x}} \Bigg) .
\end{equation}
Second, Theorem \ref{thm:prime_sum_bounder} implies 
\begin{equation}\label{eqn:MT2_SI_explicit}
    \sum_{x < p\leq x+h} \frac{1}{p} = \int_{x}^{x+h} \frac{dt}{t\log{t}} 
    + O\Bigg( \frac{3\log{x}}{8\pi\sqrt{x}} \Bigg) .
\end{equation}
A detailed proof of \eqref{eqn:MT2_SI_explicit} is given in Section \ref{sec:proof2}. Clearly, the leading constant in the error in \eqref{eqn:MT2_SI_explicit_Schoenfeld} is a factor of two worse than the leading constant in \eqref{eqn:MT2_SI_explicit}, which makes explicit the extra strength achieved by Theorem \ref{thm:prime_sum_bounder}. 

\begin{remark}
It is conjectured that the approximations in \eqref{eqn:PNT_in_SI_forms} are true with $h = x^{\varepsilon}$ for any $0 < \varepsilon < 1$. While this remains an open problem, Bank, Bary-Soroker, and Rosenzweig have proved the function field analogue of this result in \cite{BankBarySorokerRosenzweig}. Results in function fields often mirror deep conjectures in number fields, so their proof provides strong evidence that the corresponding number-theoretic conjecture is true.
\end{remark}

\subsection*{Structure}

The remainder of this paper is organised as follows. In Section \ref{sec:explicit_formula}, we prove Theorem \ref{thm:approximation}, which approximates weighted sums that over-or-underestimate $\psi(x+h) - \psi(x)$ using a smoothing argument. This is by far the most technical aspect of the paper. In Section \ref{sec:proof_pnt_si}, we prove Theorem \ref{thm:main_pnt} using Theorem \ref{thm:approximation}. In Section \ref{sec:proof2}, we prove Theorem \ref{thm:prime_sum_bounder} using \eqref{eqn:SI_result_theta_explicit} in standard arguments.

\section{Explicit approximation for a weighted sum}\label{sec:explicit_formula}

Let
\begin{equation*}
    h = o(x),\quad 0 < \delta \leq \frac{h}{2}, \quad h_0 = \frac{h}{x}, \quad \delta_0 = \frac{\delta}{x} ,
\end{equation*}
and $\widehat{w}$ be a smooth function such that $0 \leq \widehat{w}(t) \leq 1$ for $0 \leq t \leq 2$, $\widehat{w}(1) = 1$, and $\widehat{w}^{(k)}(0) = \widehat{w}^{(k)}(2) = 0$ for any $k \in \{0,1,2\}$. In particular, we will choose
\begin{equation}\label{eqn:w_hat_choice}
    \widehat{w}(t) = \frac{\cos(\pi(t-1)) + 1}{2} . 
\end{equation}
Next, let
\begin{equation*}
    w_{\pm}(t) = 
    \begin{cases}
        \widehat{w}\left(\frac{t-\alpha_{\pm}+\delta_0}{\delta_0}\right) & \text{if } \alpha_{\pm} - \delta_0 < t < \alpha_{\pm},\\
        1 & \text{if } \alpha_{\pm} \leq t \leq \beta_{\pm},\\
        \widehat{w}\left(\frac{t-\beta_{\pm}+\delta_0}{\delta_0}\right) & \text{if } \beta_{\pm} < t < \beta_{\pm} +\delta_0,\\
        0 & \text{otherwise},
    \end{cases}
\end{equation*}
in which 
\begin{equation*}
    \alpha_{\pm} = 
    \begin{cases}
        1 &\text{if $+$,}\\
        1 + \delta_0 &\text{if $-$,}
    \end{cases}
    \quad\text{and}\quad
    \beta_{\pm} = 
    \begin{cases}
        1 + h_0 &\text{if $+$,}\\
        1 + h_0 - \delta_0 &\text{if $-$.}
    \end{cases}
\end{equation*}
The weight function $w_{\pm}(t)$ is visualised in Figure \ref{fig:w_pm}. In this section, we bound the error in the approximation 
\begin{equation*}
    \Omega_{\pm}(x) = \sum_{n} w_{\pm}(n/x) \Lambda(n) \sim h ,
\end{equation*}
for appropriate $h$. Our result is presented below.

\begin{theorem}\label{thm:approximation}
Recall the definition of $L$ from \eqref{eqn:L_def}. If the RH is true, $\log{x} \geq 20$, and $\sqrt{x}\log{x} \leq h \leq x^{3/4}$, then 
\begin{equation}\label{eqn:recall_me}
    |\Omega_{\pm}(x) - h |
    < \left(\frac{1}{\pi} \logb{\frac{h}{\sqrt{x}\log{x}}} + L + \frac{1.02164}{\log{x}}\right) \sqrt{x}\log{x} . 
\end{equation}
\end{theorem}

\begin{figure}
\begin{center}
\begin{tikzpicture}
  \begin{axis}[
    axis lines=middle,
    xlabel={$t$},
    ylabel={$w_{\pm}(t)$},
    ymin=-0.1, ymax=1.1,
    xmin=0.3, xmax=2.2,
    xtick={0.5,1,1.5,2.0},
    xticklabels={$\,\alpha_\pm - \delta_0$, $\alpha_\pm$, $\beta_\pm$, $\beta_\pm + \delta_0$},
    ytick={0,1},
    domain=0.0:3.0,
    samples=400,
    smooth,
    thick
  ]
  
  \addplot [
    blue
  ] 
  ({x}, {
    (x >= 0.5 && x < 1.0) * (cos(deg(pi*(2*(x - 0.5) - 1))) + 1)/2 +
    (x >= 1.0 && x <= 1.5) * 1 +
    (x > 1.5 && x < 2.0) * (cos(deg(pi*(2*(x - 1.5)))) + 1)/2
  });

  \end{axis}
\end{tikzpicture}
\end{center}
    \caption{The weight function $w_{\pm}(t)$.}
    \label{fig:w_pm}
\end{figure}

To prove this result requires several technical arguments, so we break this process into four distinct steps, split across four sections. In Section \ref{ssec:auxiliary_bounds}, we establish properties of the inverse Mellin transform of $w_{\pm}$ that will be important throughout. In Section \ref{ssec:explicit_formula}, we use these insights to establish an explicit formula, which relates $\Omega_{\pm}(x)$ to a sum over the non-trivial zeros of the Riemann zeta-function $\zeta(s)$. In Section \ref{ssec:approximate_formula}, we import auxiliary results to bound the sum over zeros and establish a parametrised upper bound for $|\Omega_{\pm}(x) - h|$; this result is presented in Lemma \ref{lem:paramers_everywhere}. In Section \ref{ssec:final_steps}, we make optimised choices for these parameters and deduce Lemma \ref{lem:Omega_omega_approximation}. Using Lemma \ref{lem:Omega_omega_approximation}, we prove Theorem \ref{thm:approximation} at the end of Section \ref{ssec:final_steps}.

\begin{remark}
Originally, $\widehat{w}(t) = t^m (2-t)^m$ was chosen for some real parameter $m>1$. After applying this choice and optimising for $m$ at the end, this closely resembled the trigonometric function in \eqref{eqn:w_hat_choice}, which is more elegant and allowed us to simplify our arguments. 
\end{remark}

\subsection{Step 1: The inverse Mellin transform}\label{ssec:auxiliary_bounds}

Henceforth, let $\varrho = \beta + i\gamma$ denote the complex zeros of the Riemann zeta-function $\zeta(s)$. Further, recall the inverse Mellin transform of $w_{\pm}$ is 
\begin{equation*}
    W_{\pm}(s) = \int_0^\infty y^{s-1} w_{\pm}(y) \,dy ,
\end{equation*}
and the inverse Mellin transform formula tells us
\begin{equation}\label{eqn:InvMellinTransformW}
    w_{\pm}(t) = \frac{1}{2\pi i} \int_{2-i\infty}^{2+i\infty} W_{\pm}(s) t^{-s} \,ds,
\end{equation}
which is admissible because $W_{\pm}(s)$ is analytic on $\Re{s} > 0$. The following lemma provides important bounds on $W_{\pm}(s)$.

\begin{lemma}\label{lem:properties}
First, we have
\begin{equation*}
    |x W_{\pm}(1) - h | \leq \delta .
\end{equation*}
Second, if the RH is true, $j\in\{0,1,2,3\}$, and $\varrho = \beta + i\gamma$ is any non-trivial zero of $\zeta(s)$, then
\begin{equation*}
    |W_{\pm}(\varrho)| \leq \frac{\delta_0^{1-j} \Phi_j}{|\gamma|^{j}}
\end{equation*}
such that
\begin{equation*}
    \Phi_j = 
    \begin{cases}
        \frac{h}{\delta} + \frac{1}{\sqrt{1-\delta_0}} &\text{if $j = 0$,} \\
        2 (1 + h_0 + \delta_0)^{\frac{1}{2}} &\text{if $j = 1$,} \\
        2\pi (1 + h_0 + \delta_0)^{\frac{3}{2}}  &\text{if $j = 2$.} 
    \end{cases}
\end{equation*}
Third, if $\ell$ is any positive integer, then
\begin{equation*}
    |W_{+}(-2\ell)| \leq \frac{(1-\delta_0)^{-2\ell}}{\ell} .
\end{equation*}
\end{lemma}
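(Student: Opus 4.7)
The plan is to handle all three claims uniformly by splitting the defining integral
\begin{equation*}
    W_\pm(s) = \int_0^\infty y^{s-1} w_\pm(y)\,dy
\end{equation*}
into the rising piece, the flat piece, and the falling piece; then applying either a direct estimate or repeated integration by parts. On each transition piece, the substitution $u = (y-c)/\delta_0$ (with $c = \alpha_\pm - \delta_0$ or $\beta_\pm$) reduces the analysis to integrals involving the fixed profile $\widehat{w}(u) = u^m(2-u)^m$, whose symmetry about $u=1$ and whose vanishing conditions $\widehat{w}^{(k)}(0) = \widehat{w}^{(k)}(2) = 0$ for $k\le 2$ are the key features to exploit.

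For the first inequality, I will compute $W_\pm(1)$ explicitly. The flat piece contributes exactly $\beta_\pm - \alpha_\pm$ (namely $h_0$ for $+$ and $h_0 - 2\delta_0$ for $-$). The two transition pieces together contribute $\delta_0 \int_0^2 \widehat{w}(u)\,du$, which evaluates via the beta function and Legendre's duplication formula to $\delta_0 \cdot \Gamma(m+1)\sqrt{\pi}/\Gamma(m+\tfrac{3}{2})$. Multiplying by $x$ and subtracting $h$ yields the signed errors $+\delta\,\Gamma(m+1)\sqrt{\pi}/\Gamma(m+\tfrac{3}{2})$ and $-\delta(2 - \Gamma(m+1)\sqrt{\pi}/\Gamma(m+\tfrac{3}{2}))$ for the two choices of sign, and taking the larger absolute value produces the claimed bound.

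The second inequality is the main step. For $j=0$ I use the trivial bound $|W_\pm(\varrho)| \le \int y^{-1/2}|w_\pm(y)|\,dy$, estimating $y^{-1/2}\le (1-\delta_0)^{-1/2}$ on the transition pieces and $y^{-1/2}\le 1$ on the flat piece, then bounding each transition integral crudely by $\delta_0\int_0^2\widehat{w}$. For $j=1,2$, integration by parts $j$ times (with all boundary terms killed by the vanishing of $\widehat{w}^{(k)}$ at the ends) gives
\begin{equation*}
    W_\pm(\varrho) = \frac{(-1)^j}{\varrho(\varrho+1)\cdots(\varrho+j-1)}\int_0^\infty y^{\varrho+j-1} w_\pm^{(j)}(y)\,dy ,
\end{equation*}
and the elementary inequality $|\varrho(\varrho+1)\cdots(\varrho+j-1)|\ge |\varrho|^j$ (which for $\varrho = \tfrac{1}{2}+i\gamma$ is straightforward from $|\varrho + k|^2 - |\varrho|^2 = k^2 + k \ge 0$) produces the $|\varrho|^j$ denominator. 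Then I bound $y^{\beta + j - 1}$ by $(1+h_0+\delta_0)^{\beta+j-1}$ and reduce to $\int_0^2 |\widehat{w}^{(j)}(u)|\,du$. For $j=1$ the total variation equals $2$. The hard part is $j=2$: writing $v = 1-u$ gives $\widehat{w}''(u) = 2m(1-v^2)^{m-2}\bigl[(2m-1)v^2 - 1\bigr]$, which changes sign precisely at $v = \pm 1/\sqrt{2m-1}$, and recognising the antiderivative $-v(1-v^2)^{m-1}$ allows the absolute-value integral to be evaluated exactly, producing the factor $\tfrac{8m}{\sqrt{2m-1}}(1 - 1/\sqrt{2m-1})^{m-1}(1 + 1/\sqrt{2m-1})^{m-1}$. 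This computation, together with keeping track of the correct powers of $\delta_0$ after the change of variables, is where the care must go.

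Finally, for the third inequality I integrate by parts once to write $W_+(-2\ell) = \tfrac{1}{2\ell}\int y^{-2\ell} w_+'(y)\,dy$ (boundary terms vanish because $w_+$ has compact support bounded away from $0$). On the support of $w_+'$ the bound $y^{-2\ell}\le (1-\delta_0)^{-2\ell}$ holds, and $\int |w_+'(y)|\,dy = 2$ by the total-variation calculation already used in $j=1$, which delivers the stated estimate $(1-\delta_0)^{-2\ell}/\ell$.
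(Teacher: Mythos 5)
Your proposal is correct and follows essentially the same route as the paper: compute $W_\pm(1)$ exactly via the beta integral, bound $|W_\pm(\varrho)|$ under RH by integrating by parts $j$ times (boundary terms killed by the vanishing of $\widehat{w}^{(k)}$ at $0$, $1$, $2$), change variables to reduce everything to $\int_0^2|\widehat{w}^{(j)}|$ with the correct power of $\delta_0$, and use $|\varrho+k|\ge|\varrho|$ on the half-line; the third part is one integration by parts plus total variation. The only (cosmetic) difference is that you perform all $j$ integrations by parts before the change of variables and evaluate $\int_0^2|\widehat{w}^{(2)}|$ via the substitution $v=1-u$ and the antiderivative $-v(1-v^2)^{m-1}$, whereas the paper alternates IBP and substitution and directly reads off $4\widehat{w}^{(1)}(1-1/\sqrt{2m-1})$ from the sign change — these give identical results.
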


\begin{proof}
To begin, note that our choice \eqref{eqn:w_hat_choice} implies
\begin{equation}\label{eqn:integral_comps}
\begin{split}
    \int_{0}^2 \widehat{w}(u)\,du = 1, \quad
    \int_{0}^2 |\widehat{w}^{(1)}(u)|\,du = 2, \quad
    \int_{0}^2 |\widehat{w}^{(2)}(u)|\,du &= 2\pi .
\end{split}
\end{equation}
Using the substitutions $t = \delta_0 u + \alpha_{\pm} - \delta_0$ and $t = \delta_0 u + \beta_{\pm} - \delta_0$ respectively, we have
\begin{align*}
    W_{\pm}(1) 
    = \beta_{\pm} - \alpha_{\pm} + \int_{\alpha_{\pm}-\delta_0}^{\alpha_{\pm}} w_{\pm}(t)\,dt + \int_{\beta_{\pm}}^{\beta_{\pm}+\delta_0} w_{\pm}(t)\,dt 
    &= \beta_{\pm} - \alpha_{\pm} + \delta_0 \int_0^2 \widehat{w}(u)\,du \\
    &= \beta_{\pm} - \alpha_{\pm} + \delta_0 .
\end{align*}
It follows that $W_{+}(1) = h_0 + \delta_0$ and $W_{-}(1) = h_0 - \delta_0$, hence
\begin{equation*}
    |W_{\pm}(1) - h_0 | \leq \delta_0 .
\end{equation*}

To prove the next statement, observe that the chain rule implies
\begin{equation*}
    \delta_0 w_{\pm}^{(1)}(t) = 
    \begin{cases}
        \widehat{w}^{(1)}(f_1(t)) &\text{if $\alpha_{\pm}-\delta_0 \leq t \leq \alpha_{\pm}$}\\
        \widehat{w}^{(1)}(f_2(t)) &\text{if $\beta_{\pm} \leq t \leq \beta_{\pm} + \delta_0$}
    \end{cases}
    \quad\text{where}\quad
    f_i(t) = 
    \begin{cases}
        \frac{t-\alpha_{\pm} +\delta_0}{\delta_0} &\text{if $i=1$,}\\
        \frac{t-\beta_{\pm} +\delta_0}{\delta_0} &\text{if $i=2$.}
    \end{cases}
\end{equation*}
It follows from integration by parts and the substitutions $t = \delta_0 u + \alpha_{\pm} - \delta_0$, $t = \delta_0 u + \beta_{\pm} - \delta_0$ that
\begin{align*}
    W_{\pm}(\varrho) 
    &= - \frac{\int_{0}^{1} (\delta_0 u + \alpha_{\pm} - \delta_0)^{\varrho} \widehat{w}^{(1)}(u)\,du + \int_{1}^{2} (\delta_0 u + \beta_{\pm} - \delta_0)^{\varrho} \widehat{w}^{(1)}(u)\,du}{\varrho} \\
    &= \frac{\delta_0^{-1} \left(\int_{0}^{1} (\delta_0 u + \alpha_{\pm} - \delta_0)^{\varrho + 1} \widehat{w}^{(2)}(u)\,du + \int_{1}^{2} (\delta_0 u + \beta_{\pm} - \delta_0)^{\varrho + 1} \widehat{w}^{(2)}(u)\,du \right)}{\varrho (\varrho + 1)} .
\end{align*}
Moreover, the RH implies
\begin{align*}
    |W_{\pm}(\varrho)| 
    &= \left|\int_{\alpha_{\pm}}^{\beta_{\pm}} t^{\varrho-1}\,dt + \Bigg(\int_{\alpha_{\pm}-\delta_0}^{\alpha_{\pm}} + \int_{\beta_{\pm}}^{\beta_{\pm}+\delta_0}\Bigg) t^{\varrho-1} \widehat{w}(t)\,dt \right| \\
    &\leq \int_{\alpha_{\pm}}^{\beta_{\pm}} t^{-\frac{1}{2}}\,dt + \delta_0\left|\int_{0}^{1} (\delta_0 u + \alpha_{\pm} - \delta_0)^{\varrho - 1} \widehat{w}(u)\,du + \int_{1}^{2} (\delta_0 u + \beta_{\pm} - \delta_0)^{\varrho - 1} \widehat{w}(u)\,du \right| \\
    &\leq \frac{\beta_{\pm} - \alpha_{\pm}}{\alpha_{\pm}^{1/2}} + \frac{\delta_0}{(\alpha_{\pm}-\delta_0)^{1/2}} \int_{0}^{2} \widehat{w}(u)\,du \\
    &\leq \frac{h_0}{\alpha_{\pm}^{1/2}} + \frac{\delta_0}{(\alpha_{\pm}-\delta_0)^{1/2}} \\
    &\leq h_0 + \frac{\delta_0}{(\alpha_{\pm}-\delta_0)^{1/2}} . 
\end{align*} 
For every non-trivial zero $\varrho = 1/2 + i\gamma$, so we have $|\varrho + j| \geq |\gamma|$. It follows from this and the computations in \eqref{eqn:integral_comps} that if the RH is true, then 
\begin{align*}
    |W_{\pm}(\varrho)|
    &\leq
    \left\{
    \begin{array}{l}
        h_0 + \frac{\delta_0}{(1-\delta_0)^{m + 1/2}} \\
        |\gamma|^{-1} (1 + h_0 + \delta_0)^{\frac{1}{2}} \int_{0}^{2} |\widehat{w}^{(1)}(u)| \,du \\
        |\gamma|^{-2} \delta_0^{-1} (1 + h_0 + \delta_0)^{\frac{3}{2}} \int_{0}^{2} |\widehat{w}^{(2)}(u)| \,du 
    \end{array}
    \right. 
    \leq
    \left\{
    \begin{array}{l}
        h_0 + \frac{\delta_0}{\sqrt{1-\delta_0}} \\
        |\gamma|^{-1} 2 (1 + h_0 + \delta_0)^{\frac{1}{2}} \\
        |\gamma|^{-2} 2\pi \delta_0^{-1} (1 + h_0 + \delta_0)^{\frac{3}{2}} 
    \end{array}
    \right. .
\end{align*}

To prove the final statement, note that for any positive integer $\ell$, it follows from integration by parts and the substitutions $t = \delta_0 u + \alpha_{\pm} - \delta_0$, $t = \delta_0 u + \beta_{\pm} - \delta_0$ that
\begin{align*}
    |W_{\pm}(-2\ell)| 
    &= \frac{1}{2\ell} \left|\int_{\alpha_{\pm}-\delta_0}^{\beta_{\pm}+\delta_0} t^{-2\ell} w_{\pm}^{(1)}(t)\,dt \right| \\
    &\leq \frac{1}{2\ell} \left|\left(\int_{\alpha_{\pm}-\delta_0}^{\alpha_{\pm}} + \int_{\beta_{\pm}}^{\beta_{\pm}+\delta_0}\right) t^{-2\ell} w_{\pm}^{(1)}(t)\,dt \right| \\
    &\leq \frac{(1-\delta_0)^{-2\ell}}{2\ell} \left|\left(\int_{\alpha_{\pm}-\delta_0}^{\alpha_{\pm}} + \int_{\beta_{\pm}}^{\beta_{\pm}+\delta_0}\right) w_{\pm}^{(1)}(t)\,dt \right| \\
    &= \frac{(1-\delta_0)^{-2\ell}}{2\ell} \left|\int_{0}^{2} \widehat{w}^{(1)}(u)\,du \right| 
    = \frac{(1-\delta_0)^{-2\ell}}{\ell} . \qedhere
\end{align*}
\end{proof}

\subsection{Step 2: An explicit formula}\label{ssec:explicit_formula}

Applying the content of Section \ref{ssec:auxiliary_bounds}, we prove the following result, which is an explicit formula for $\Omega_{\pm}(x)$. 

\begin{lemma}\label{lem:step_2}
We have
\begin{equation*}
    \left|\Omega_{\pm}(x)  - h + \sum_{\varrho} x^{\varrho} W_{\pm}(\varrho)\right| 
    \leq \delta + \frac{1}{(x-\delta)^2} .
\end{equation*}
\end{lemma}

\begin{proof}
It follows from \eqref{eqn:InvMellinTransformW} that
\begin{align*}
    \Omega_{\pm}(x)
    = \sum_{n} \frac{1}{2\pi i} \int_{2-i\infty}^{2+i\infty} x^s W_{\pm}(s) \Lambda(n) n^{-s} \,ds 
    &= - \frac{1}{2\pi i} \int_{2-i\infty}^{2+i\infty} x^s W_{\pm}(s) \frac{\zeta'(s)}{\zeta(s)} \,ds ,
\end{align*}
in which $\zeta(s)$ is the Riemann zeta-function. Using well known properties of $\zeta(s)$, moving the line of integration, and applying the residue theorem, we have
\begin{align}
    \Omega_{\pm}(x)
    &= x W_{\pm}(1) - \sum_{\varrho} x^{\varrho} W_{\pm}(\varrho) - \sum_{k=1}^\infty x^{-2k} W_{\pm}(-2k) , \label{eqn:step_1}
\end{align}
because $W_{\pm}(s)$ is meromorphic with one simple pole at $s=0$ whose residue is $0$, making this a removable singularity.\footnote{A proof of \eqref{eqn:step_1} would be analogous to the proof of \cite[Thm.~2.3]{FaberKadiri}, so we exclude it.} 
Further, Lemma \ref{lem:properties} tells us
\begin{align*}
    \sum_{k=1}^\infty x^{-2k} |W_{\pm}(-2k)|
    \leq 2 \sum_{\ell =1}^\infty \frac{(x-\delta)^{-2\ell}}{2\ell}
    = - \logb{1-\frac{1}{(x-\delta)^2}} 
    &\leq \frac{1}{(x-\delta)^2} .
\end{align*}
With this, the result follows from \eqref{eqn:step_1} and Lemma \ref{lem:properties}. 
\end{proof}

\subsection{Step 3: A parametrised approximate formula}\label{ssec:approximate_formula}

To bound the sum over zeros that is present in Lemma \ref{lem:step_2}, we import the following results. 

\begin{lemma}\label{lem:results_collected}
First, if $T\geq 2\pi$, then the number $N(T)$ of zeros $\varrho = \beta + i\gamma$ of $\zeta(s)$ such that $0<\gamma \leq T$ satisfies the relationship
\begin{align}
    |Q(T)| 
    &:= \left|N(T) - \frac{T}{2\pi} \logb{\frac{T}{2\pi e}} + \frac{7}{8}\right| \nonumber\\
    &\leq \min\{0.28\log{T}, 0.1038\log{T} + 0.2573\log\log{T} + 9.3675\} 
    := R(T) . \label{eqn:NT_est}
\end{align}
A straightforward consequence of \eqref{eqn:NT_est} is that if $T\geq 1$, then
\begin{equation}\label{eqn:use_me_instead}
    N(T) \leq \frac{T\log{T}}{2\pi} .
\end{equation}
Finally, we have
\begin{equation}\label{eqn:Skewes}
    \sum_{|\gamma| \geq T} \frac{1}{\gamma^2} < \frac{\log{T}}{\pi T} 
    \quad\text{for all}\quad T\geq 1.
\end{equation}
\end{lemma}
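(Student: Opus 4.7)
The three assertions in this lemma differ in character, so I would address them in turn.

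For \eqref{eqn:NT_est}, both branches of the minimum are already in the literature as explicit bounds on the Riemann--von Mangoldt remainder: the simpler $|Q(T)|\leq 0.28\log T$ is essentially Trudgian's bound, while the sharper bound with leading coefficient $0.1038$ is the recent refinement due to Hasanalizade--Shen--Wong (or a comparable contemporary source). Rather than reprove them, I would simply cite these results directly, noting that both are uniformly valid on the stated range $T\geq 2\pi$.

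The consequence \eqref{eqn:use_me_instead} splits naturally at $T = 2\pi$. For $1\leq T<14$ the inequality is trivial, because the first nontrivial zero of $\zeta$ satisfies $\gamma_1 > 14$, and so $N(T)=0$ on this range. For $T\geq 2\pi$, rearranging \eqref{eqn:NT_est} yields
\begin{equation*}
N(T)\leq \frac{T}{2\pi}\log\!\frac{T}{2\pi e} - \frac{7}{8} + R(T),
\end{equation*}
and the target reduces to verifying that $\frac{T(1+\log 2\pi)}{2\pi} + \frac{7}{8} \geq R(T)$. Since the left-hand side grows linearly in $T$ and $R(T)=O(\log T)$, one only needs to check the worst case $T=2\pi$ explicitly, which is straightforward.

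For \eqref{eqn:Skewes}, the natural tool is Stieltjes integration by parts. Pairing zeros of $\zeta$ by conjugation and noting that $N(U)/U^2\to 0$ as $U\to\infty$, one obtains
\begin{equation*}
\sum_{|\gamma|\geq T}\frac{1}{\gamma^2}
\;=\; 2\int_{T^-}^{\infty}\frac{dN(t)}{t^2}
\;=\; -\frac{2N(T^-)}{T^2} + 4\int_T^\infty \frac{N(t)}{t^3}\,dt.
\end{equation*}
Substituting $N(t) = \frac{t}{2\pi}\log(t/(2\pi e)) - \frac{7}{8} + Q(t)$ with $|Q(t)|\leq R(t)$ from \eqref{eqn:NT_est}, the main integral evaluates in closed form via one integration by parts,
\begin{equation*}
\int_T^\infty \frac{\log(t/(2\pi e))}{t^2}\,dt \;=\; \frac{\log(T/(2\pi))}{T},
\end{equation*}
and combining with the boundary contribution $-2N(T^-)/T^2$ produces the leading expression
\begin{equation*}
\frac{\log(T/(2\pi))+1}{\pi T} \;=\; \frac{\log T + 1 - \log(2\pi)}{\pi T}.
\end{equation*}
Because $\log(2\pi)>1$, this main term already sits strictly below $\log T/(\pi T)$, with a cushion of $(\log(2\pi)-1)/(\pi T) \approx 0.838/(\pi T)$. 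The error contributions from $-2Q(T^-)/T^2$ and from $4\int_T^\infty Q(t)/t^3\,dt$ are both of order $(\log T)/T^2$, using the explicit $R(t)$ bound from \eqref{eqn:NT_est} (after a second integration by parts to evaluate $\int_T^\infty \log t/t^3\,dt$).

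The main obstacle is the final bookkeeping in \eqref{eqn:Skewes}: one must confirm that the combined $Q$-contribution, bounded by the explicit $R(t)$, is indeed dominated by the cushion $(\log(2\pi)-1)/(\pi T)$ throughout the effective range of interest. Any troublesome portion near the lower endpoint can, if needed, be handled separately by direct computation using the known initial ordinates of the zeros of $\zeta$.
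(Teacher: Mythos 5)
The paper's own proof of this lemma is essentially three citations: \eqref{eqn:NT_est} is attributed to Brent--Platt--Trudgian and Hasanalizade--Shen--Wong, and \eqref{eqn:Skewes} is taken directly from Skewes. The bound \eqref{eqn:use_me_instead} is labelled ``a straightforward consequence'' and the paper does not spell it out.

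Your treatment of \eqref{eqn:NT_est} matches the paper. Your derivation of \eqref{eqn:use_me_instead} is correct and fills in exactly the omitted check: rearranging to $R(T)\leq \frac{T(1+\log 2\pi)}{2\pi}+\frac{7}{8}$, checking the worst case $T=2\pi$, and noting that the difference is increasing (the left side grows linearly while the second branch of $R$ grows like $\log T$ and the first branch has derivative $0.28/T \leq 0.28/(2\pi)$, far below $(1+\log 2\pi)/(2\pi)$). The triviality for $T<\gamma_1\approx 14.13$, where $N(T)=0$, is indeed enough to cover $1\leq T<2\pi$.

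Where you genuinely diverge from the paper is \eqref{eqn:Skewes}. The paper quotes this verbatim from Skewes' lemma, whereas you attempt a self-contained re-derivation from \eqref{eqn:NT_est} via Stieltjes integration by parts. Your algebra is correct: the boundary term $-2N(T^-)/T^2$, the substitution of the Riemann--von Mangoldt approximation, the cancellation of the two $\frac{7}{4T^2}$ contributions, the value $\int_T^\infty t^{-2}\log\frac{t}{2\pi e}\,dt = T^{-1}\log\frac{T}{2\pi}$, and the resulting main term $\frac{\log T - (\log 2\pi - 1)}{\pi T}$ all check out, with cushion $(\log 2\pi - 1)/(\pi T)$. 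But the point you flag as ``the main obstacle'' is a real gap, not a formality: if you bound the $Q$-contributions by $2R(T)/T^2 + 4\int_T^\infty R(t)/t^3\,dt$ and use $R(t)\leq 0.28\log t$, the error is roughly $(1.12\log T + 0.28)/T^2$, and comparing against the cushion $0.2667/T$ forces $1.12\log T + 0.28 \leq 0.2667\,T$, which fails for $T\lesssim 12$. Since \eqref{eqn:NT_est} is itself only stated for $T\geq 2\pi$, the entire argument must be supplemented by direct computation against the first few zero ordinates for $T$ below roughly $14$, where fortunately the sum is constant. You correctly identify that this patching is needed, but you do not carry it out, so what you have is a sketch rather than a proof. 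The paper sidesteps all of this by citing Skewes, which is the more economical route; your approach does, however, show that the bound is (essentially) recoverable from \eqref{eqn:NT_est} alone, at the cost of a small-$T$ verification that you would still need to perform.
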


\begin{proof}
The first result \eqref{eqn:NT_est} is a combination of observations from Brent--Platt--Trudgian \cite[Cor.~1]{BPT_mean_square} and Hasanalizade--Shen--Wong \cite[Cor.~1.2]{HasanalizadeShenWong}. 
Finally, Skewes proved \eqref{eqn:Skewes} in \cite[Lem.~1(ii)]{skewes}.
\end{proof}

\begin{lemma}\label{lem:BPT}
Let $A_0 = 2.067$, $A_1 = 0.059$, and $A_2 = 1/150$. If $2\pi \leq U \leq V$, then
\begin{equation*}
    \sum_{\substack{U \leq \gamma \leq V}} \phi(\gamma)
    = \frac{1}{2\pi} \int_{U}^{V} \phi(t)\logb{\frac{t}{2\pi}}\,dt + \phi(V)Q(V) - \phi(U)Q(U) + \mathcal{E}(U),
\end{equation*}
in which $|\mathcal{E}(U)| \leq 2(A_0 + A_1\log{U})|\phi'(U)| + (A_1 + A_2) \frac{\phi(U)}{U}$ and \eqref{eqn:NT_est} tell us $|Q(T)| \leq R(T)$.
\end{lemma}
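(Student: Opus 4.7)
The plan is to recast the discrete sum over zeros as a Riemann--Stieltjes integral against the counting function $N(t)$ and then split off the smooth part using the definition of $Q$. Specifically, I would start from
\begin{equation*}
    \sum_{U \leq \gamma \leq V} \phi(\gamma) = \int_{U}^{V} \phi(t)\, dN(t),
\end{equation*}
and substitute $N(t) = \frac{t}{2\pi}\log(t/2\pi e) - \frac{7}{8} + Q(t)$, which holds by the definition of $Q$ in Lemma \ref{lem:results_collected}. Since $\frac{d}{dt}\bigl[\frac{t}{2\pi}\log(t/2\pi e)\bigr] = \frac{1}{2\pi}\log(t/2\pi)$, this decomposes as
\begin{equation*}
    \int_U^V \phi(t)\, dN(t) = \frac{1}{2\pi}\int_U^V \phi(t)\log(t/2\pi)\, dt + \int_U^V \phi(t)\, dQ(t),
\end{equation*}
matching the first term on the right-hand side of the claimed identity.

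Next I would apply Stieltjes integration by parts to the remaining integral, obtaining
\begin{equation*}
    \int_U^V \phi(t)\, dQ(t) = \phi(V)Q(V) - \phi(U)Q(U) - \int_U^V Q(t)\phi'(t)\, dt.
\end{equation*}
This exhibits the boundary contributions $\phi(V)Q(V) - \phi(U)Q(U)$ asserted in the lemma and identifies $\mathcal{E}(U) = -\int_U^V Q(t)\phi'(t)\, dt$, reducing the problem to bounding this single integral in the stated form.

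For the error estimate, I would insert the pointwise bound $|Q(t)| \leq R(t)$ from \eqref{eqn:NT_est} and then integrate by parts once more, transferring the derivative off of $\phi$ and onto the zero-counting side. Schematically, splitting $R(t)$ into its main $A_0 + A_1\log t$ portion and its secondary $A_2/\log t$-type correction, and using the (implicit) monotone decay of $\phi$, the boundary term produces a contribution of size $2(A_0 + A_1\log U)|\phi'(U)|$ (the factor of $2$ arising from combining $|Q(t)|$ with its behaviour on either side of an integration-by-parts), while the remaining $\int_U^V R'(t)\phi(t)\, dt$ is controlled by $(A_1 + A_2)\phi(U)/U$ after using $R'(t) \ll 1/t + 1/(t\log t)$ and a standard estimate of the form $\int_U^\infty \phi(t)/t\, dt \ll \phi(U)\log(\cdots)$ under the decay hypotheses on $\phi$.

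The hardest part will be the book-keeping required to produce the specific constants $A_0 = 2.067$, $A_1 = 0.059$, and $A_2 = 1/150$. These arise from carefully balancing the two alternative bounds on $|Q(T)|$ in \eqref{eqn:NT_est} over a crossover threshold depending on $U$, and from verifying that the resulting estimate is genuinely independent of $V$ (which is only sensible once one imposes that $\phi$ decays sufficiently rapidly, as it will in every application). Since the lemma is quoted verbatim from Brent--Platt--Trudgian \cite{BrentAccurate}, a complete proof would essentially reproduce their optimisation of the constants rather than introduce any new conceptual ingredient beyond the two integrations by parts above.
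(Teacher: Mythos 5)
The paper does not actually prove Lemma \ref{lem:BPT}; it is quoted verbatim from Brent--Platt--Trudgian \cite{BrentAccurate} as a refinement of Lehman's classical lemma, so there is no in-paper proof to compare against. That said, your sketch begins correctly: writing the sum as $\int_U^V \phi\,dN$, splitting off the smooth part via the definition of $Q$, and integrating by parts once to obtain $\mathcal{E}(U) = -\int_U^V Q(t)\phi'(t)\,dt$ is exactly the standard opening.

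The gap is in how you bound $\mathcal{E}(U)$. You propose to \emph{first} insert the pointwise bound $|Q(t)| \leq R(t)$ and \emph{then} integrate by parts. Once you replace $Q$ by $|Q| \leq R$ you have destroyed the sign cancellation in $Q$, and no subsequent manipulation can recover it: for a decreasing $\phi$, $\int_U^V R(t)|\phi'(t)|\,dt$ integrates by parts to boundary and tail terms involving $R(U)\phi(U) \asymp \phi(U)\log U$, not $\phi'(U)\log U$. Concretely, with $\phi(t) = t^{-1}$ (the case actually used in \eqref{eqn:BPT_applied}), your route gives $|\mathcal{E}(U)| \lesssim (\log U)/U$, whereas the lemma asserts $|\mathcal{E}(U)| \lesssim (\log U)/U^2$ --- a full factor of $U$ stronger. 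The missing ingredient is a Littlewood/Backlund-type bound on the \emph{integral} of the oscillatory part, $\bigl|\int_{0}^{T} Q(t)\,dt\bigr| \leq A_0 + A_1\log T$ (together with a companion bound supplying $A_2$), which is where the cancellation in $S(t)$ enters and is exactly what Brent--Platt--Trudgian establish in the companion paper \cite{BPT_mean_square} cited in Lemma \ref{lem:results_collected}. One then performs the second integration by parts on $-\int Q\phi'$ \emph{before} taking absolute values, transferring an antiderivative of $Q$ (not of $R$) onto $\phi''$, and it is this antiderivative bound that produces the $(A_0 + A_1\log U)|\phi'(U)|$ boundary term. Your sketch never invokes this and so cannot yield the claimed constants or even the claimed shape of the error term.
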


\begin{proof}
This result was proved by Brent, Platt, and Trudgian in \cite{BrentAccurate}. It refines a well-known result from Lehman \cite[Lem.~1]{Lehman}.
\end{proof}

Using these auxiliary bounds, we convert Lemma \ref{lem:step_2} into a usable, parametrised approximation for $\Omega_{\pm}(x)$ in the following result. 

\begin{lemma}\label{lem:paramers_everywhere}
Let $1 \leq \kappa_1 \leq \kappa_2$ such that $\kappa_1\kappa_2 \leq 4\pi^2$, $\kappa_3\geq 2$ such that $\kappa_1 \leq \kappa_3 \leq 20$, and $0.1 \leq \varepsilon \leq 0.45$ be real parameters. Further, suppose $\sqrt{x}\log{x} \leq h \leq x^{1-\varepsilon}$, $\delta = \sqrt{x}\log{x} / \kappa_3$, and 
\begin{equation*}
    E(T_1,T_2) = 2\left(\frac{R(T_2)}{T_2} + \frac{R(T_1)}{T_1} + \frac{A_1 + A_2 + 2 (A_0+A_1\log{T_1})}{T_1^2} \right) .
\end{equation*} 
If the RH is true and $\log{x} \geq 20$, then
\begin{equation*}
    |\Omega_{\pm}(x) - h |
    < \left(\frac{1}{\pi} \logb{\frac{h}{\sqrt{x}\log{x}}} + \Psi_1(x, \varepsilon)\right) \sqrt{x}\log{x} , 
\end{equation*}
in which
\begin{align*}
    \Psi_1(x,\varepsilon)
    &= \frac{1}{\kappa_3} 
    + \frac{\kappa_1}{2\pi} \left(1 + \frac{\kappa_3^{-1}}{\sqrt{1 - \delta_0}}\right) + \frac{1}{\kappa_2}\left(1 + \frac{2\log{\kappa_2}}{\log{x}}\right) \left(1 + \frac{3x^{-\varepsilon}}{2}\right)^{\frac{3}{2}} \\
    &\qquad + \left( \frac{1}{\pi} \logb{\frac{\kappa_2 \kappa_3}{\kappa_1}} + \frac{4 E(\kappa_1 x^{\varepsilon},\tfrac{\kappa_2\kappa_3 \sqrt{x}}{\log{x}})}{\log{x}} \right) \left(1 + \frac{3x^{-\varepsilon}}{2}\right)^{\frac{1}{2}} \\
    &\qquad + \frac{x^{-\frac{5}{2}}}{\log{x}}\left(1-\frac{\log{x}}{\kappa_3\sqrt{x}}\right)^{-2} + \frac{1}{\pi} \logb{\frac{x^{\frac{1}{2} - \varepsilon}}{\sqrt{x}\log{x}}} \left(\left(1 + \frac{3x^{-\varepsilon}}{2}\right)^{\frac{1}{2}} - 1\right) . 
\end{align*}
\end{lemma}

\begin{proof}
Since $\kappa_3 \geq 2$, we have $2\delta \leq h$. Using Lemmas \ref{lem:properties} and \ref{lem:step_2}, we have
\begin{align*}
    |\Omega_{\pm}(x) - h | 
    &\leq \delta + x^{\frac{1}{2}} \sum_{\varrho} |W_{\pm}(\varrho)| + \frac{1}{(x-\delta)^2} \\
    &\leq \delta + \frac{\delta}{\sqrt{x}} \sum_{|\gamma | \leq \frac{\kappa_1 x}{h}} \Phi_0 
    + \sqrt{x} \sum_{\frac{\kappa_1 x}{h} \leq |\gamma | \leq \frac{\kappa_2 x}{\delta}} \frac{\Phi_1}{|\gamma |} 
    + \frac{x^{\frac{3}{2}}}{\delta} \sum_{|\gamma | > \frac{\kappa_2 x}{\delta}} \frac{\Phi_2}{\gamma^2} 
    + \frac{1}{(x-\delta)^2} . 
\end{align*}
It follows from \eqref{eqn:use_me_instead} and $\kappa_1 \leq \log{x}$ that
\begin{align*}
    \frac{\delta}{\sqrt{x}} \sum_{|\gamma | \leq \frac{\kappa_1 x}{h}} \Phi_0
    = \frac{\delta}{\sqrt{x}} \left(\frac{h}{\delta} + \frac{1}{\sqrt{1 - \delta_0}}\right) \sum_{|\gamma | \leq \frac{\kappa_1 x}{h}} 1
    &\leq \frac{\kappa_1 \sqrt{x} \logb{\frac{\kappa_1 x}{h}}}{\pi} \left(1 + \frac{\delta / h}{\sqrt{1 - \delta_0}}\right) \\ 
    &\leq \frac{\kappa_1 \sqrt{x} \logb{\frac{\kappa_1 \sqrt{x}}{\log{x}}}}{\pi} \left(1 + \frac{\kappa_3^{-1}}{\sqrt{1 - \delta_0}}\right) \\
    &\leq \frac{\kappa_1 \sqrt{x} \log{x}}{2\pi} \left(1 + \frac{\kappa_3^{-1}}{\sqrt{1 - \delta_0}}\right) .
\end{align*}
Next, Lemma \ref{lem:BPT} with $\phi(t) = t^{-1}$, algebraic manipulation, and $\kappa_1 x / h \geq 2\pi$ imply 
\begin{align}
    \sum_{\frac{\kappa_1 x}{h} < |\gamma| \leq \frac{\kappa_2 x}{\delta}} \frac{1}{|\gamma|}
    &\leq \frac{1}{2\pi} \left(\logb{\frac{\kappa_2 x}{2\pi \delta}}\right)^2 - \frac{1}{2\pi} \left(\logb{\frac{\kappa_1 x}{2\pi h}}\right)^2 + 2 E(\tfrac{\kappa_1 x}{h},\tfrac{\kappa_2 x}{\delta}) \nonumber\\
    &= \frac{1}{2\pi} \left(\left(\logb{\frac{\kappa_2}{2\pi \delta}}\right)^2 - \left(\logb{\frac{\kappa_1}{2\pi h}}\right)^2\right) \nonumber\\
    &\qquad+ \frac{\log{x}}{\pi} \left(\logb{\frac{\kappa_2}{2\pi \delta}} - \logb{\frac{\kappa_1}{2\pi h}}\right) + 2 E(\tfrac{\kappa_1 x}{h},\tfrac{\kappa_2 x}{\delta}) \nonumber\\
    &= \frac{1}{2\pi} \logb{\frac{\kappa_1\kappa_2 x^2}{4\pi^2 h\delta}} \logb{\frac{\kappa_2 h}{\kappa_1 \delta}} + 2 E(\tfrac{\kappa_1 x}{h},\tfrac{\kappa_2 x}{\delta}) .
    \label{eqn:BPT_applied} 
\end{align}
Note that $\kappa_1 x / h \geq \kappa_1 x^{\varepsilon} \geq x^{0.1} \geq 2\pi$, because $\log{x} \geq 20$. Therefore, the RH and \eqref{eqn:BPT_applied} imply
\begin{align*}
    \sqrt{x} &\sum_{\frac{\kappa_1 x}{h} \leq |\gamma | \leq \frac{\kappa_2 x}{\delta}} \frac{\Phi_1}{|\gamma |}  
    \leq \Phi_1 \sqrt{x} \left( \frac{1}{2\pi} \logb{\frac{\kappa_1\kappa_2 x^2}{4\pi^2 h\delta}} \logb{\frac{\kappa_2 h}{\kappa_1 \delta}} + 2 E(\tfrac{\kappa_1 x}{h},\tfrac{\kappa_2 x}{\delta}) \right) \\
    &\leq \Phi_1 \sqrt{x} \left( \frac{1}{2\pi} \logb{\frac{\kappa_1\kappa_2\kappa_3 x}{4\pi^2 (\log{x})^2}} \logb{\frac{\kappa_2 \kappa_3 h}{\kappa_1 \sqrt{x}\log{x}}} + 2 E(\tfrac{\kappa_1 x}{h},\tfrac{\kappa_2\kappa_3 \sqrt{x}}{\log{x}}) \right) \\
    &\leq \Phi_1 \sqrt{x} \left( \frac{\log{x}}{2\pi} \left(\logb{\frac{\kappa_2 \kappa_3}{\kappa_1}} + \logb{\frac{h}{\sqrt{x}\log{x}}}\right) + 2 E(\kappa_1 x^{\varepsilon},\tfrac{\kappa_2\kappa_3 \sqrt{x}}{\log{x}}) \right) \\
    &\leq \sqrt{x} \left( \frac{\log{x}}{\pi} \left(\logb{\frac{\kappa_2 \kappa_3}{\kappa_1}} + \logb{\frac{h}{\sqrt{x}\log{x}}}\right) + 4 E(\kappa_1 x^{\varepsilon},\tfrac{\kappa_2\kappa_3 \sqrt{x}}{\log{x}}) \right) \left(1 + \frac{3h}{2x}\right)^{\frac{1}{2}} ,
\end{align*}
since $\kappa_1\kappa_2 \leq 4\pi^2$ and $\log{x} \geq 20 \geq \kappa_3$. Finally, \eqref{eqn:Skewes} implies
\begin{align*}
    \frac{x^{\frac{3}{2}}}{\delta} \sum_{|\gamma | > \frac{\kappa_2 x}{\delta}} \frac{\Phi_2}{\gamma^2} 
    \leq \frac{\Phi_2 \sqrt{x}\logb{\frac{\kappa_2 x}{\delta}}}{\kappa_2 \pi} 
    &\leq \frac{\Phi_2 \sqrt{x}\logb{\frac{\kappa_2\kappa_3 \sqrt{x}}{\log{x}}}}{\kappa_2 \pi} \\
    &\leq \frac{\Phi_2 \sqrt{x} (2\log{\kappa_2} + \log{x})}{2\pi \kappa_2} \\ 
    &\leq \frac{\sqrt{x}\log{x}}{\kappa_2}\left(1 + \frac{2\log{\kappa_2}}{\log{x}}\right) \left(1 + \frac{3h}{2x}\right)^{\frac{3}{2}} , 
\end{align*}
since $\log{x} \geq 20 \geq \kappa_3$. Combining these observations, we have proved the result. 
\end{proof}

\subsection{Step 4: Final deductions}\label{ssec:final_steps}

Next, we make optimised choices for each of the parameters $\kappa_i$ in Lemma \ref{lem:paramers_everywhere} to deduce the following result.

\begin{lemma}\label{lem:Omega_omega_approximation}
Recall the definition of $L$ from \eqref{eqn:L_def}. Let
\begin{equation*}
    \kappa_1 = \frac{2}{1 + \kappa_3^{-1}} , \quad
    \kappa_2 = \pi , \quad
    \text{and}\quad
    \kappa_3 = \frac{\pi + \sqrt{\pi^2 + 4\pi}}{2} .
\end{equation*}
If the RH is true, $\log{x} \geq 20$, and $\sqrt{x}\log{x} \leq h \leq x^{3/4}$, then 
\begin{equation}\label{eqn:recall_me_omega}
    |\Omega_{\pm}(x) - h |
    < \left(\frac{1}{\pi} \logb{\frac{h}{\sqrt{x}\log{x}}} + L + \frac{\Psi(x)}{\log{x}}\right) \sqrt{x}\log{x} , 
\end{equation}
in which 
\begin{align*}
    \Psi(x)
    &= \frac{\kappa_1 (\log{x})^2}{4\pi\kappa_3^2\sqrt{x}} \left(1 - \frac{\log{x}}{\kappa_3 \sqrt{x}}\right)^{-\frac{3}{2}} 
    + x^{-\frac{5}{2}} \left(1 - \frac{\log{x}}{\kappa_3 \sqrt{x}}\right)^{-2} 
    + \frac{3 \log{x}}{4\pi x^{1/4}} \logb{\frac{\kappa_2 \kappa_3 x^{1/4}}{\kappa_1 \log{x}}} \\
    &\qquad + \left(\frac{9 \log{x}}{4\kappa_2 x^{1/4}} + 4 E(\kappa_1 x^{1/4},\tfrac{\kappa_2\kappa_3 \sqrt{x}}{\log{x}})\right) \left(1 + \frac{3x^{-1/4}}{2}\right)^{\frac{1}{2}} 
    + \frac{2\log{\kappa_2}}{\kappa_2} \left(1 + \frac{3x^{-1/4}}{2}\right)^{\frac{3}{2}} . 
\end{align*}
\end{lemma}

\begin{proof}
To begin, note that
\begin{align*}
    \lim_{x\to\infty} \Psi_1(x,\varepsilon) 
    &= \frac{1}{\kappa_3} 
    + \frac{\kappa_1}{2\pi} \left(1 + \frac{1}{\kappa_3}\right) 
    + \frac{\log{\kappa_2} + \log{\kappa_3} - \log{\kappa_1}}{\pi} 
    + \frac{1}{\kappa_2} .
\end{align*}
We choose $\kappa_1$, $\kappa_2$, and $\kappa_3$ such that this expression is minimised. The terms involving $\kappa_1$ are minimised at $\kappa_1 = 2/(1+\kappa_3^{-1})$ and the terms involving $\kappa_2$ are minimised at $\kappa_2 = \pi$. Inserting these choices, we obtain
\begin{align*}
    \lim_{x\to\infty} \Psi_1(x,\varepsilon) 
    &= \frac{1}{\kappa_3} + \frac{2 + \logb{\frac{\pi}{2}(\kappa_3 + 1)}}{\pi} ,
\end{align*}
which is minimised at $\kappa_3 = (\pi+\sqrt{4\pi+\pi^{2}})/2$. With these choices inserted, $$L = \lim_{x\to\infty} \Psi_1(x,\varepsilon) , $$ which satisfies \eqref{eqn:L_def}. If the RH is true, $0.1 \leq \varepsilon \leq 0.25$, and $\log{x} \geq 20$, then Lemma \ref{lem:paramers_everywhere} tells us 
\begin{equation}\label{eqn:recall_me_omega}
    |\Omega_{\pm}(x) - h |
    < \left(\frac{1}{\pi} \logb{\frac{h}{\sqrt{x}\log{x}}} + L + \Psi_2(x, \varepsilon)\right) \sqrt{x}\log{x} , 
\end{equation}
in which
\begin{align*}
    \Psi_2(x,\varepsilon)
    &= \frac{\kappa_1}{2\pi\kappa_3} \left(\left(1 - \frac{\log{x}}{\kappa_3\sqrt{x}}\right)^{-\frac{1}{2}} - 1\right) + \frac{1}{\pi} \logb{\frac{\kappa_2 \kappa_3 x^{\frac{1}{2}-\varepsilon}}{\kappa_1 \log{x}}} \left(\left(1 + \frac{3x^{-\varepsilon}}{2}\right)^{\frac{1}{2}} - 1\right) \\
    &\qquad + \frac{4 E(\kappa_1 x^{\varepsilon},\tfrac{\kappa_2\kappa_3 \sqrt{x}}{\log{x}})}{\log{x}} \left(1 + \frac{3x^{-\varepsilon}}{2}\right)^{\frac{1}{2}} + \frac{1}{\kappa_2} \left(\left(1 + \frac{3x^{-\varepsilon}}{2}\right)^{\frac{3}{2}} - 1\right) \\
    &\qquad + \frac{2\log{\kappa_2}}{\kappa_2 \log{x}} \left(1 + \frac{3x^{-\varepsilon}}{2}\right)^{\frac{3}{2}} + \frac{x^{-\frac{5}{2}}}{\log{x}} \left(1 - \frac{\log{x}}{\kappa_3\sqrt{x}}\right)^{-2} . 
\end{align*}
To bound each of the differences $(1 \pm o(1))^r - 1$, re-write the difference as an integral and note that the difference is majorised by the length of the integral (namely $o(1)$) multiplied by the maximum value obtained by the integrand. It follows that
\begin{align*}
    \Psi_2(x,\varepsilon) &\leq \frac{\kappa_1\log{x}}{4\pi\kappa_3^2\sqrt{x}} \left(1 - \frac{\log{x}}{\kappa_3 \sqrt{x}}\right)^{-\frac{3}{2}} + \frac{x^{-\frac{5}{2}}}{\log{x}} \left(1 - \frac{\log{x}}{\kappa_3 \sqrt{x}}\right)^{-2} + \frac{3 x^{-\varepsilon}}{4\pi} \logb{\frac{\kappa_2 \kappa_3 x^{\frac{1}{2}-\varepsilon}}{\kappa_1 \log{x}}} \\
    &\qquad + \left(\frac{9 x^{-\varepsilon}}{4\kappa_2} + \frac{4 E(\kappa_1 x^{\varepsilon},\tfrac{\kappa_2\kappa_3 \sqrt{x}}{\log{x}})}{\log{x}}\right) \left(1 + \frac{3x^{-\varepsilon}}{2}\right)^{\frac{1}{2}} + \frac{2\log{\kappa_2}}{\kappa_2 \log{x}} \left(1 + \frac{3x^{-\varepsilon}}{2}\right)^{\frac{3}{2}} .
\end{align*}
With this and $\varepsilon = 1/4$, we have proved the result.
\end{proof}

\begin{proof}[Proof of Theorem \ref{thm:approximation}]
The function $\Psi(x)$ decreases as $x$ increases on $\log{x}\geq 20$. Therefore, assuming the same choices as in Lemma \ref{lem:Omega_omega_approximation}, we have proved that if the RH is true, $\log{x}\geq 20$, and $\sqrt{x}\log{x} \leq h \leq x^{3/4}$, then 
\begin{align*}
    |\Omega_{\pm}(x) - h |
    &< \left(\frac{1}{\pi} \logb{\frac{h}{\sqrt{x}\log{x}}} + L + \frac{\Psi(e^{20})}{\log{x}}\right) \sqrt{x}\log{x} \\
    &\leq \left(\frac{1}{\pi} \logb{\frac{h}{\sqrt{x}\log{x}}} + L + \frac{1.02164}{\log{x}}\right) \sqrt{x}\log{x} ,
\end{align*}
which is the result.
\end{proof}


\section{Proof of main results}\label{sec:proof_pnt_si}

Using Theorem \ref{thm:approximation}, we prove Theorem \ref{thm:main_pnt} in Section \ref{sec:proof_pnt_si}. Building upon this, we prove Theorem \ref{thm:prime_sum_bounder} in Section \ref{sec:proof2}. 

\subsection{Proof of Theorem \ref{thm:main_pnt}}\label{sec:proof_pnt_si}

We prove each statement \eqref{eqn:SI_result_psi_explicit} and \eqref{eqn:SI_result_theta_explicit} in Theorem \ref{thm:main_pnt} independently. To this end, we require the following auxiliary bound.

\begin{lemma}\label{lem:diffs}
If $x\geq e^{20}$ and $\sqrt{x}\log{x} \leq h \leq x^{3/4}$, then $$| \psi(x+h) - \psi(x) - \theta(x+h) + \theta(x) | \leq 0.00437 \sqrt{x} + 0.5445 x^{\frac{1}{3}}.$$
\end{lemma}

\begin{proof}
It follows from \cite[Eqns.~(21),~(31)]{Costa} and \cite[Cor.~5.1]{Broadbent} that for all $\log{x}\geq 20$, we have
\begin{equation}\label{eqn:diffs}
    0.999 x^\frac{1}{2} + 0.885 x^\frac{1}{3}
    < \psi(x)-\theta(x)
    < \alpha_1 x^\frac{1}{2} + \alpha_2 x^\frac{1}{3},
\end{equation}
in which $\alpha_1= 1 + 1.93378 \cdot 10^{-8}$ and $\alpha_2 = 1.4263$. It follows that for all $\log{x}\geq 20$, we have
\begin{align*}
    \psi(x+h) - \psi(x) &- \theta(x+h) + \theta(x) \\
    &< \Bigg( \alpha_1 \left(1 + \frac{h}{x}\right)^{\frac{1}{2}} - 0.999\Bigg) \sqrt{x} 
    + \Bigg( \alpha_2 \left(1 + \frac{h}{x}\right)^{\frac{1}{3}} - 0.885\Bigg) x^{\frac{1}{3}} \\
    &\leq \left( \alpha_1 \left(1 + x^{-\frac{1}{4}}\right)^{\frac{1}{2}} - 0.999\right) \sqrt{x} 
    + \Bigg( \alpha_2 \left(1 + x^{-\frac{1}{4}}\right)^{\frac{1}{3}} - 0.885\Bigg) x^{\frac{1}{3}} \\
    &\leq 0.00437 \sqrt{x} + 0.5445 x^{\frac{1}{3}} 
\end{align*}
and
\begin{align*}
    \psi(x+h) - \psi(x) &- \theta(x+h) + \theta(x) \\
    &\geq \Bigg( 0.999 \left(1 + \frac{h}{x}\right)^{\frac{1}{2}} - \alpha_1\Bigg) \sqrt{x} 
    + \Bigg( 0.885 \left(1 + \frac{h}{x}\right)^{\frac{1}{3}} - \alpha_2 \Bigg) x^{\frac{1}{3}} \\
    &> \Bigg( 0.999 \left(1 + \frac{\log{x}}{\sqrt{x}}\right)^{\frac{1}{2}} - \alpha_1\Bigg) \sqrt{x} 
    + \Bigg( 0.885 \left(1 + \frac{\log{x}}{\sqrt{x}}\right)^{\frac{1}{3}} - \alpha_2 \Bigg) x^{\frac{1}{3}} \\
    &\geq - 0.00055 \sqrt{x} - 0.54104 x^{\frac{1}{3}} .
\end{align*}
Combining these observations we have proved the result.
\end{proof}

\begin{proof}[Proof of \eqref{eqn:SI_result_psi_explicit}]
The definition of $w_{\pm}$ tells us that
\begin{equation}\label{eqn:bounds}
    \Omega_{-}(x) < \psi(x+h) - \psi(x) < \Omega_{+}(x) .
\end{equation}
Using Theorem \ref{thm:approximation}, it follows that if the RH is true, $\log{x} \geq 20$, and $\sqrt{x}\log{x} \leq h \leq x^{3/4}$, then
\begin{equation*}
    \left| \psi(x+h) - \psi(x) - h \right| < \left(\frac{1}{\pi} \logb{\frac{h}{\sqrt{x}\log{x}}} + L + \frac{1.02164 }{\log{x}} \right) \sqrt{x}\log{x} ,
\end{equation*}
which is stronger than \eqref{eqn:SI_result_psi_explicit}.
\end{proof}

\begin{proof}[Proof of \eqref{eqn:SI_result_theta_explicit}]
It follows from \eqref{eqn:SI_result_psi_explicit} and Lemma \ref{lem:diffs} that if the RH is true, $\log{x}\geq 20$, and $\sqrt{x}\log{x} \leq h \leq x^{3/4}$, then
\begin{align*}
    |\theta(x+h) &- \theta(x) - h|
    = |\theta(x+h) - \theta(x) - \psi(x+h) + \psi(x) + \psi(x+h) - \psi(x) - h| \\
    &\leq |\psi(x+h) - \psi(x) - h| + 0.00437 \sqrt{x} + 0.5445 x^{\frac{1}{3}} \\
    &\leq \left(\frac{1}{\pi} \logb{\frac{h}{\sqrt{x}\log{x}}} + L + \frac{1.02164 }{\log{x}}\right) \sqrt{x}\log{x} + 0.00437 \sqrt{x} + 0.5445 x^{\frac{1}{3}} \\
    &\leq \left(\frac{1}{\pi} \logb{\frac{h}{\sqrt{x}\log{x}}} + L + \frac{1.02601  + 0.5445 x^{-\frac{1}{6}}}{\log{x}}\right) \sqrt{x}\log{x} \\
    &\leq \left(\frac{1}{\pi} \logb{\frac{h}{\sqrt{x}\log{x}}} + L + \frac{1.04544}{\log{x}}\right) \sqrt{x}\log{x} ,
\end{align*}
which is stronger than \eqref{eqn:SI_result_theta_explicit}. 
\end{proof}

\subsection{Proof of Theorem \ref{thm:prime_sum_bounder}}\label{sec:proof2}

Recall from \cite[(4.15)]{Rosser} that
\begin{equation}\label{eqn:Kerfuffle}
    \sum_{p\leq x} f(p)
    = \int_2^x \frac{f(t)}{\log{t}}\,dt + L_f + \frac{(\theta(x) - x) f(x)}{\log{x}} 
    + \int_{x}^{\infty} (\theta(t) - t) \frac{d}{dt}\left(\frac{f(t)}{\log{t}}\right)\,dt ,
\end{equation}
where $L_f$ is a constant given by
\begin{equation*}
    L_f = \frac{2 f(2)}{\log{2}} - \int_{2}^{\infty} (\theta(t) - t) \frac{d}{dt}\left(\frac{f(t)}{\log{t}}\right)\,dt .
\end{equation*}
Using this relationship, we prove Theorem \ref{thm:prime_sum_bounder} as follows.

\begin{proof}[Proof of Theorem \ref{thm:prime_sum_bounder}]
Since $L_f$ is a constant, \eqref{eqn:Kerfuffle} implies
\begin{align*}
    \sum_{x < p\leq x+h} f(p) 
    &= \int_{x}^{x+h} \frac{f(t)}{\log{t}}\,dt + \frac{(\theta(x+h) - x - h) f(x+h)}{\log(x+h)} \\
    &\hspace{3cm} - \frac{(\theta(x) - x) f(x)}{\log{x}} 
    - \int_{x}^{x+h} (\theta(t) - t) \frac{d}{dt}\left(\frac{f(t)}{\log{t}}\right)\,dt .
\end{align*}
Since $f(x)$ is non-increasing and $-f(x) \leq f(x+h) - f(x) \leq 0$, we have 
\begin{equation*}
    \Bigg| \frac{f(x+h)}{\log(x+h)} - \frac{f(x)}{\log{x}} \Bigg| 
    \leq \frac{| f(x+h) - f(x)|}{\log{x}}
    \leq \frac{f(x)}{\log{x}} .
\end{equation*}
Therefore, we have
\begin{align*}
    \Bigg| \sum_{x < p\leq x+h} f(p) - \int_{x}^{x+h} \frac{f(t)}{\log{t}}\,dt \Bigg| 
    &\leq \frac{|\theta(x+h) - \theta(x) - h| f(x)}{\log{x}} + \frac{|\theta(x+h) - x - h | f(x)}{\log{x}} \\
    &\qquad\qquad - \int_{x}^{x+h} \frac{|\theta(t) - t|}{\log{t}} \left(f'(t) - \frac{f(t)}{t\log{t}}\right)\,dt .
\end{align*}
Note that each of the terms in this bound is positive, since $-f'(t) \geq 0$. If the RH is true and $x\geq 599$, then Schoenfeld \cite{Schoenfeld} also tells us
\begin{equation}\label{eqn:Schoenfeld}
    |\theta(x) - x| < \frac{\sqrt{x}(\log{x})^2}{8\pi} .
\end{equation}
With this, if the RH is true and $\log{x}\geq 20$, then \eqref{eqn:SI_result_theta_explicit} and \eqref{eqn:Schoenfeld} imply
\begin{align*}
    \Bigg| \sum_{x < p\leq x+h} f(p) - \int_{x}^{x+h} \frac{f(t)}{\log{t}}\,dt \Bigg| 
    &\leq \left(\frac{1}{\pi} \logb{\frac{h}{\sqrt{x}\log{x}}} + L + \frac{1.046}{\log{x}}\right) f(x) \sqrt{x} \\
    &\qquad\qquad + \frac{\sqrt{x+h}(\log(x+h))^2}{\sqrt{x} (\log{x})^2} \frac{f(x) \sqrt{x}\log{x}}{8\pi} \\
    &\qquad\qquad - \frac{1}{8\pi} \int_{x}^{x+h} \sqrt{t}\log{t} \left(f'(t) - \frac{f(t)}{t\log{t}}\right)\,dt .
\end{align*}
Finally,
\begin{align*}
    \frac{\sqrt{x+h}(\log(x+h))^2}{\sqrt{x} (\log{x})^2}
    = \left(1 + \frac{h}{x}\right)^{\frac{1}{2}} \left(\frac{\log(x+h)}{\log{x}}\right)^2
    &= \left(1 + \frac{h}{x}\right)^{\frac{1}{2}} \left(1 + \frac{\log(1+h/x)}{\log{x}}\right)^2 \\
    &\leq \left(1 + \frac{h}{x}\right)^{\frac{1}{2}} \left(1 + \frac{h}{x\log{x}}\right)^2 \\
    &\leq \left(1 + \frac{h}{x}\right)^{\frac{5}{2}}.
\end{align*}
The result follows naturally.
\end{proof}

Finally, we demonstrate how Theorem \ref{thm:prime_sum_bounder} can be used to prove \eqref{eqn:MT2_SI_explicit}. 

\begin{proof}[Proof of \eqref{eqn:MT2_SI_explicit}]
Suppose that $f(p) = 1/p$, the RH is true, $\sqrt{x}\log{x} \leq h \leq x^{3/4}$, and $\log{x}\geq 20$. Theorem \ref{thm:prime_sum_bounder} implies
\begin{align*}
    \Bigg| \sum_{x < p\leq x+h} \frac{1}{p} - \int_{x}^{x+h} \frac{1}{t\log{t}}\,dt \Bigg| 
    &\leq \left(\frac{1}{\pi} \logb{\frac{h}{\sqrt{x}\log{x}}} + L + \frac{1.046}{\log{x}}\right) x^{-\frac{1}{2}} \\
    &\qquad\qquad + \left(1 + \frac{h}{x}\right)^{\frac{5}{2}} \frac{\log{x}}{8\pi\sqrt{x}} 
    + \frac{1}{8\pi} \int_{x}^{x+h} \frac{\log{t} + 1}{t^{3/2}} \,dt .
\end{align*}
Further,
\begin{align*}
    \frac{1}{8\pi} \int_{x}^{x+h} \frac{\log{t} + 1}{t^{3/2}} \,dt
    &= \frac{1}{8\pi} \left( \frac{2(\log(x+h) + 3)}{\sqrt{x+h}} - \frac{2(\log{x} + 3)}{\sqrt{x}} \right) \\
    &\leq \frac{2(\log(x+h) - \log{x})}{8\pi\sqrt{x}}
    = \frac{2 \logb{1+\frac{h}{x}}}{8\pi\sqrt{x}}
    \leq \frac{2h}{8\pi x^{3/2}} .
\end{align*}
Therefore, we have proved
\begin{align*}
    &\Bigg| \sum_{x < p\leq x+h} \frac{1}{p} - \int_{x}^{x+h} \frac{1}{t\log{t}}\,dt \Bigg| \\
    &\qquad\qquad\leq \left(\frac{1}{\pi} \logb{\frac{x^{1/4}}{\log{x}}} + L + \frac{1.046}{\log{x}}\right) x^{-\frac{1}{2}} + \left(1 + \frac{h}{x}\right)^{\frac{5}{2}} \frac{\log{x}}{8\pi\sqrt{x}} + \frac{2h}{8\pi x^{3/2}} \\
    &\qquad\qquad\leq \left(\frac{1}{4\pi} + \frac{1}{8\pi} \left(1 + \frac{h}{x}\right)^{\frac{5}{2}} + \frac{1}{\log{x}} \left(L + \frac{1.046}{\log{x}}\right)\right) \frac{\log{x}}{\sqrt{x}} + \frac{2h}{8\pi x^{3/2}} \\ 
    &\qquad\qquad= \left(2 + \left(1 + \frac{h}{x}\right)^{\frac{5}{2}} + \frac{8\pi}{\log{x}} \left(L + \frac{1.046}{\log{x}}\right) + \frac{2h}{8\pi x\log{x}}\right) \frac{\log{x}}{8\pi\sqrt{x}}
    \ll \frac{3\log{x}}{8\pi\sqrt{x}} . \qedhere
\end{align*}
\end{proof}

\section*{Acknowledgements}

The author thanks the Heilbronn Institute for Mathematical Research for their support. Further, I am fortunate to be surrounded by excellent colleagues who have provided valuable feedback and support throughout the development of this paper.

\bibliographystyle{amsplain}
\bibliography{references}

\end{document}